\documentclass[letterpaper, 10 pt, conference]{ieeeconf}
\IEEEoverridecommandlockouts
\usepackage{cite}
\usepackage{amsmath,amssymb,amsfonts}
\usepackage{graphicx}
\usepackage{textcomp}
\usepackage{xcolor}
\usepackage{mathrsfs}
\usepackage{bm}
\usepackage{algorithm}
\usepackage{algpseudocode}
\usepackage{caption}
\usepackage{subcaption}
\usepackage{stfloats}
\usepackage{wrapfig}
\usepackage{booktabs}
\DeclareMathOperator*{\argmin}{arg\,min}

\newtheorem{Theorem}{Theorem}
\newtheorem{Lemma}{Lemma}

\title{\LARGE \bf
An Occupation-Measure and Frank--Wolfe Framework for Heterogeneous Mean-Field Control
}

\author{Di Yu, Sixiong You and Chaoying Pei%
    \thanks{Di Yu is with the Department of Mathematics and Statistics, Missouri University of Science and Technology, Rolla, MO 65401, USA. Email: {\tt\small dy9dq@mst.edu}}
    \thanks{Sixiong You is with Eli Lilly and Company, Indianapolis, IN 46225, USA. Email: {\tt\small yousixiong@gmail.com}}
    \thanks{Chaoying Pei is with the Department of Mechanical and Aerospace Engineering, Missouri University of Science and Technology, Rolla, MO 65401, USA. Email: {\tt\small cpk4t@mst.edu}}
}

\begin{document}

\maketitle

\begin{abstract}
Heterogeneous mean-field control (MFC) involves multiple interacting populations with distinct dynamics, control constraints, and coupling structures. We develop a heterogeneous occupation-measure mean-field control (OM-MFC) framework that reformulates the population-distribution control problem as an optimization problem over occupation measures. In this representation, potentially nonlinear dynamics enter through linear weak Liouville constraints, while nonlocal inter-population interactions become structured bilinear functionals of the occupation-measure marginals. Exploiting this structure, we derive a Frank--Wolfe (FW) method and show that its linear minimization oracle decomposes into independent population-wise optimal control problems once the current population distributions are fixed, enabling parallel computation without an a priori discretization of the measure space. We further establish a sufficient convexity condition based on the symmetrized matrix-valued interaction kernel, under which Frank--Wolfe with an exact oracle admits the standard \(O(1/k)\) convergence rate. Numerical examples on heterogeneous UAV coordination and three-dimensional search-and-rescue illustrate the framework both within the certified convex regime and for directional interactions beyond that regime.
\end{abstract}

\section{Introduction}

Mean-field control (MFC) has become an important framework for large-scale coordination problems~\cite{albi2017mean}. By replacing agent-wise descriptions with the evolution of population distributions, it provides scalable control formulations whose complexity depends on the state of a representative agent rather than on the number of agents~\cite{zhang2016optimal}. This viewpoint is particularly appealing in applications such as multi-UAV systems~\cite{chen2020mean} and satellite constellations~\cite{yu2026occupation}. Much of the existing MFC literature focuses on the homogeneous setting, where agents share common dynamics, control constraints, and interaction structures. In this setting, a variety of PDE-based~\cite{fornasier2014mean}, geometric and optimal-transport-based~\cite{emerick2023continuum}, and learning-based formulations~\cite{chen2020mean} have been developed, leading to substantial progress in both analysis and computation.

In many practical settings, however, such systems are inherently heterogeneous: different groups may have distinct dynamics, control capabilities, and interaction patterns. This has motivated growing interest in heterogeneous and multi-population mean-field models. A representative early work is~\cite{bensoussan2018mean}, which formulates multi-population MFC and derives the associated adjoint PDE systems. More recently, graphon-based MFC has been used to model heterogeneous and non-exchangeable interactions~\cite{cao2025probabilistic}. Mean-field approximations have also been used to study heterogeneous cooperative multi-agent decision-making, together with approximation guarantees and learning-based algorithms~\cite{mondal2022approximation}. These approaches provide important analytical and modeling tools, but they do not directly yield the same optimization structure exploited here: a measure-space formulation with linear dynamical constraints together with a Frank--Wolfe oracle reducible to classical optimal control subproblems.

In our previous work, homogeneous MFC was formulated as a population-level optimization problem over occupation measures~\cite{yuyoupei2026ACC,yu2026occupation}, building on the classical occupation-measure lifting of optimal control~\cite{lasserre2008nonlinear}. That formulation provides linear dynamical constraints in measure space and supports projection-free Frank--Wolfe optimization without requiring an a priori discretization of the measure space. The present work asks whether this structure survives when multiple heterogeneous populations are coupled through population-dependent interaction costs. The occupation-measure lifting and the Frank--Wolfe methodology are inherited from that work; the heterogeneous coupling is what requires new analysis.



The main contributions are threefold. First, we formulate heterogeneous multi-population MFC as an optimization problem over population-specific occupation measures with linear weak Liouville constraints. Second, we show that the Frank--Wolfe linear minimization oracle remains population-wise separable despite cross-population coupling, reducing each iteration to independent classical optimal control problems. Third, we establish a sufficient condition for convexity through a symmetrized matrix-valued interaction kernel and establish an $O(1/k)$ convergence rate under the corresponding positive-semidefiniteness condition. Numerical examples on heterogeneous UAV coordination and three-dimensional search-and-rescue illustrate the framework in both convex and nonconvex regimes.

\section{Motivation: A Two-Population Finite-Agent Problem}
\label{sec:motivation}
We motivate the heterogeneous OM-MFC formulation through
a finite-agent system with two interacting populations.
The two-population setting simplifies notation; the
formulation extends directly to finitely many populations.

\subsection{Heterogeneous Two-Population Dynamics}

We consider two interacting populations indexed by $a\in\{1,2\}$ over a finite horizon $[0,T]$. For population $a$, let $N_a$ be the number of agents, with state $x_i^a(t)\in\mathcal X\subset\mathbb R^{n_x}$ and control $u_i^a(t)\in\mathcal U_a\subset\mathbb R^{n_{u_a}}$. Their dynamics are
\begin{equation}
\label{eq:dynamic}
\dot x_i^a(t)
=
f_a\bigl(x_i^a(t),u_i^a(t)\bigr),
\qquad i=1,\dots,N_a,
\end{equation}
where $f_a$ is continuous and Lipschitz in the state uniformly in the control. The empirical distribution of population $a$ is
\[
\rho_t^{a,N}
:=
\frac{1}{N_a}
\sum_{i=1}^{N_a}
\delta_{x_i^a(t)}.
\]
Heterogeneity is allowed in $f_a$, $\mathcal U_a$, and $\rho_0^a$.

\subsection{Finite-Agent Heterogeneous Cost Functional}

We define the finite-agent cost
{\footnotesize
\begin{align}
\label{eq:het-N-cost}
J_N
&=
\sum_{a=1}^2
\Bigg[
\frac{1}{N_a}
\sum_{i=1}^{N_a}
\int_0^T
\ell_a\bigl(t,x_i^a(t),u_i^a(t)\bigr)\,dt
+
\int_{\mathcal X}\Psi_a(x)\,d\rho_T^{a,N}(x)
\Bigg]
\notag\\
&+
\sum_{p=1}^2\sum_{q=1}^2
\kappa_{pq}
\int_0^T
\iint
W_{pq}(x-y)
\,d\rho_t^{p,N}(x)
\,d\rho_t^{q,N}(y)\,dt.
\end{align}}
Here $\ell_a$ and $\Psi_a$ are the running and terminal
costs, and $\kappa_{pq}\ge0$ are interaction weights.
The kernels and weights may vary across ordered population
pairs, allowing population-dependent and directional
interactions. The resulting interaction term admits a
symmetrized kernel representation, developed in
Section~\ref{sec:properties}.

\subsection{Occupation-Measure Representation and Motivation}

For each population $a\in\{1,2\}$, let
\(
\Sigma_a=[0,T]\times\mathcal X\times\mathcal U_a,
\)
with $\mathcal M_+(\Sigma_a;T)$ denoting nonnegative Borel measures of total mass $T$ and $\mathcal M_+(\mathcal X;1)$ probability measures on $\mathcal X$.

For an admissible trajectory-control pair
$\omega^a=(x^a(\cdot),u^a(\cdot))$,
define the running and terminal occupation measures by
\begin{align}
\int_{\Sigma_a}\varphi\,d\mu[\omega^a]
&=
\int_0^T
\varphi\bigl(t,x^a(t),u^a(t)\bigr)\,dt,
\label{eq:hetero-running-measure}\\
\int_{\mathcal X}\psi\,d\nu[\omega^a]
&=
\psi\bigl(x^a(T)\bigr),
\label{eq:hetero-terminal-measure}
\end{align}
for all $\varphi\in C(\Sigma_a)$ and $\psi\in C(\mathcal X)$; see~\cite{lasserre2008nonlinear}. Averaging these measures within each population yields empirical occupation-measure pairs $(\mu_1^N,\nu_1^N)$ and $(\mu_2^N,\nu_2^N)$, motivating the population-level formulation introduced next.
\section{Heterogeneous OM-MFC Formulation}
\label{sec:main_problem}

\subsection{Population-Level Relaxation and Problem Formulation}

For each population $a\in\{1,2\}$, let
\[
(\mu_a,\nu_a)
\in
\mathcal M_+(\Sigma_a;T)
\times
\mathcal M_+(\mathcal X;1)
\]
denote population-level running and terminal measures.

Given initial distributions
$\rho_0^1,\rho_0^2\in\mathcal M_+(\mathcal X;1)$,
define
{
\small
\begin{align}
&\Delta_a
:=
\Big\{
(\mu_a,\nu_a):\ 
\int_{\mathcal X}
v(T,x)\,d\nu_a(x)
-
\int_{\mathcal X}
v(0,x)\,d\rho_0^a(x)
\notag\\
&=
\int_{\Sigma_a}
\bigl(
\partial_t v
+
\nabla_xv\cdot f_a(x,u)
\bigr)\,d\mu_a,
\forall v\in C^1([0,T]\times\mathcal X)
\Big\}.
\label{eq:hetero-feasible}
\end{align}}
The overall feasible set is
\(
\Delta:=\Delta_1\times\Delta_2.
\) Throughout, $\mathcal X$ and $\mathcal U_a$ are compact,
$\Delta_a\ne\varnothing$, and $\ell_a$, $\Psi_a$, and
$W_{pq}$ are continuous on their respective domains.

Taking $v(t,x)=h(t)$ in~\eqref{eq:hetero-feasible} gives
$\int_{\Sigma_a}h'\,d\mu_a=\int_0^T h'(t)\,dt$ for every
$h\in C^1([0,T])$, so the time marginal of any feasible $\mu_a$ is
Lebesgue measure on $[0,T]$; the disintegration below is therefore well
defined and $\mu_{a,t}$ is a probability measure for a.e.\ $t$. For each $a$, let
\[
d\mu_a(t,x,u)
=
dt\,\mu_{a,t}(dx,du),
\qquad
\rho_{a,t}:=\pi_x\#\mu_{a,t},
\]
denote the time disintegration of $\mu_a$ and the corresponding state marginal. For $p,q\in\{1,2\}$, define
{\small
\begin{align}
\mathcal I_{pq}(\mu_p,\mu_q)
:={}&
\int_0^T
\iint_{\mathcal X\times\mathcal X}
W_{pq}(x-y)
d\rho_{p,t}(x)
\,d\rho_{q,t}(y)\,dt.
\label{eq:hetero-interaction}
\end{align}}

The heterogeneous OM-MFC objective is
{\small
\begin{align}
J(\mu_1,\nu_1,\mu_2,\nu_2)
={}&
\sum_{a=1}^2
\int_{\Sigma_a}
\ell_a(t,x,u)\,d\mu_a
+
\sum_{a=1}^2
\int_{\mathcal X}
\Psi_a(x)\,d\nu_a
\notag\\
&+
\sum_{p=1}^2
\sum_{q=1}^2
\kappa_{pq}
\mathcal I_{pq}(\mu_p,\mu_q).
\label{eq:hetero-mainObj}
\end{align}}
Accordingly,
\begin{equation}
\label{eq:hetero-OMMFC}
\min_{(\mu_1,\nu_1,\mu_2,\nu_2)\in\Delta}
J(\mu_1,\nu_1,\mu_2,\nu_2).
\tag{P}
\end{equation}

Problem~\eqref{eq:hetero-OMMFC} is an infinite-dimensional optimization problem over coupled population-level measure pairs. The population dynamics enter through linear weak Liouville constraints, whereas the interaction term couples their state marginals.

\subsection{PDE and Relaxed-Control Interpretation}

For any feasible pair
$(\mu_a,\nu_a)\in\Delta_a$,
further disintegration with respect to the state variable gives
\[
\mu_{a,t}(dx,du)
=
\lambda_{a,t,x}(du)\rho_{a,t}(dx),
\]
or equivalently
\[
d\mu_a(t,x,u)
=
dt\,
\lambda_{a,t,x}(du)
\rho_{a,t}(dx),
\]
where
$\lambda_{a,t,x}\in\mathcal M_+(\mathcal U_a;1)$
is a measurable probability kernel. Substitution into the weak Liouville equation gives
\begin{equation}
\partial_t\rho_{a,t}
+
\nabla_x\cdot
\bigl(F_a(t,x)\rho_{a,t}\bigr)
=
0,
\label{eq:hetero-relaxed-pde}
\end{equation}
where
\begin{equation}
F_a(t,x)
=
\int_{\mathcal U_a}
f_a(x,u)\,
d\lambda_{a,t,x}(u).
\end{equation}
The derivation follows the same disintegration and weak continuity-equation argument as in the homogeneous case; see~\cite{yu2026occupation} and~\cite{bogachev2007measure}.

This representation gives a relaxed population-level control interpretation of problem~\eqref{eq:hetero-OMMFC}. In particular, the effective velocity $F_a(t,x)$ belongs to the convex hull of
\(
\{f_a(x,u):u\in\mathcal U_a\},
\)
as in classical relaxed control~\cite{vinter2010optimal}. The probability kernel $\lambda_{a,t,x}$ may therefore be interpreted as a relaxed Markov control law.
When $\lambda_{a,t,x}=\delta_{u_a(t,x)}$, the relaxed dynamics reduce to
those induced by the deterministic feedback law $u=u_a(t,x)$.
Thus, deterministic feedback control is recovered as a special case of the occupation-measure representation. In general, however, an occupation-measure solution need not determine a unique deterministic feedback controller; its natural control representation is the conditional control kernel $\lambda_{a,t,x}$.

Problem~\eqref{eq:hetero-OMMFC} is studied as a
population-level relaxed control problem. Finite-agent
limits and general relaxation-gap analysis are beyond
our scope; classical realization of the FW oracle is
established below under additional assumptions.

\section{Properties}
\label{sec:properties}

\subsection{Convexity Condition}

For $p,q\in\{1,2\}$, define the symmetrized kernels
\begin{equation}
K_{pq}(z):=\tfrac12\bigl[\kappa_{pq}W_{pq}(z)+\kappa_{qp}W_{qp}(-z)\bigr],
\label{eq:sym-kernel}
\end{equation}
and the matrix-valued kernel
$\mathbf K(z):=\bigl(K_{pq}(z)\bigr)_{p,q=1}^{2}$. Whatever the ordered
kernels $W_{pq}$, the symmetrization gives $K_{pq}(z)=K_{qp}(-z)$, i.e. \(\mathbf K(-z)=\mathbf K(z)^{\top}\), which makes the induced quadratic form real and symmetric while leaving
each $\mathbf K(z)$ free to be nonsymmetric; kernels with a non-even
directional factor are therefore covered. A change of
variables gives the exact representation
\begin{align}
&\sum_{p,q=1}^2\kappa_{pq}\mathcal I_{pq}(\mu_p,\mu_q) \nonumber \\
&\quad=\sum_{p,q=1}^2\int_0^T\!\!\iint K_{pq}(x-y)\,
d\rho_{p,t}(x)\,d\rho_{q,t}(y)\,dt,
\label{eq:sym-interaction}
\end{align}
so convexity is governed by $\mathbf K$ rather than by the individual
ordered kernels $\kappa_{pq}W_{pq}$. Following the scalar convention
of~\cite{yu2026occupation}, we call $\mathbf K$ positive
semidefinite if
\begin{equation}
\sum_{p,q=1}^{2}\iint K_{pq}(x-y)\,d\sigma_p(x)\,d\sigma_q(y)\ge0
\label{eq:matrix-psd}
\end{equation}
for every pair $\sigma=(\sigma_1,\sigma_2)$ of finite signed Borel
measures on $\mathcal X$.

\begin{Theorem}[Convexity of the heterogeneous objective]
\label{thm:hetero-conv}
Assume $\mathcal X$ is compact, each $K_{pq}$ is bounded and Borel
measurable on $\mathcal X-\mathcal X$, and $\mathbf K$ satisfies the
positive semidefiniteness condition~\eqref{eq:matrix-psd}. Then the
objective $J$ in~\eqref{eq:hetero-mainObj} is convex on
$\Delta=\Delta_1\times\Delta_2$.
\end{Theorem}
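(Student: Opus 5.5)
The plan is to split $J$ into a linear part and a quadratic part and show the quadratic part is convex along segments in $\Delta$. The running and terminal costs $\sum_a\int\ell_a\,d\mu_a+\sum_a\int\Psi_a\,d\nu_a$ are linear in $(\mu_a,\nu_a)$, since $\ell_a,\Psi_a$ are continuous on compact sets and hence integrable. So only the interaction term matters. By the representation~\eqref{eq:sym-interaction}, that term equals
\[
Q(\mu_1,\mu_2):=\int_0^T B(\rho_t,\rho_t)\,dt,\qquad B(\sigma,\tau):=\sum_{p,q=1}^2\iint K_{pq}(x-y)\,d\sigma_p(x)\,d\tau_q(y),
\]
with $\rho_t=(\rho_{1,t},\rho_{2,t})$. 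Convexity of $J$ on $\Delta$ therefore reduces to convexity of $Q$ on $\Delta$.

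Fix two feasible points $(\mu^0,\nu^0),(\mu^1,\nu^1)\in\Delta$ and $\theta\in[0,1]$, and set $\mu^\theta=(1-\theta)\mu^0+\theta\mu^1$. The first step is to check that the disintegration is compatible with convex combinations. All feasible $\mu_a$ have Lebesgue time marginal, as noted after~\eqref{eq:hetero-feasible}, and $\Delta_a$ is convex because the constraint is linear. By uniqueness of disintegration, $\mu^\theta_{a,t}=(1-\theta)\mu^0_{a,t}+\theta\mu^1_{a,t}$ for a.e.\ $t$. Pushforward under $\pi_x$ is linear, so $\rho^\theta_{a,t}=(1-\theta)\rho^0_{a,t}+\theta\rho^1_{a,t}$ for a.e.\ $t$.

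Next comes the standard quadratic identity. $B$ is bilinear and finite on pairs of finite signed measures because each $K_{pq}$ is bounded and Borel on the compact $\mathcal X-\mathcal X$. Hence, for a.e.\ $t$,
\[
(1-\theta)B(\rho^0_t,\rho^0_t)+\theta B(\rho^1_t,\rho^1_t)-B(\rho^\theta_t,\rho^\theta_t)=\theta(1-\theta)\,B(\rho^1_t-\rho^0_t,\rho^1_t-\rho^0_t).
\]
This holds for nonsymmetric $B$ as well, since cross terms enter only through $B(\sigma,\tau)+B(\tau,\sigma)$. The difference $\sigma_t=\rho^1_t-\rho^0_t$ is a pair of finite signed measures, so~\eqref{eq:matrix-psd} makes the right-hand side nonnegative.

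Finally, integrate over $t\in[0,T]$. This needs $t\mapsto B(\rho_t,\rho_t)$ to be measurable and integrable. Integrability is immediate from the bound $|B|\le 4\max_{pq}\|K_{pq}\|_\infty$ on probability pairs. Measurability is the step I expect to be the main technical point. I would handle it by writing $B(\rho_t,\rho_t)$ as an integral of $K_{pq}(x-y)$ against the product kernel $\mu_{p,t}\otimes\mu_{q,t}$, which is measurable in $t$, and then applying Fubini--Tonelli for bounded Borel integrands with measurable families of kernels. This gives $Q(\mu^\theta)\le(1-\theta)Q(\mu^0)+\theta Q(\mu^1)$. Adding back the linear part proves the convexity of $J$ on $\Delta$ claimed in Theorem~\ref{thm:hetero-conv}.
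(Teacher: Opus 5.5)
Your proof is correct and follows the paper's argument: the marginals $\rho_t$ depend affinely on $\mu$ by uniqueness of disintegration, the quadratic identity for $B(\rho_t,\rho_t)$ holds, $B(\rho^1_t-\rho^0_t,\rho^1_t-\rho^0_t)\ge 0$ by the positive semidefiniteness condition, and integrating in $t$ gives convexity. The differences are minor: the paper first checks that $B$ is symmetric, which, as you correctly note, the identity does not need, and you additionally justify measurability of $t\mapsto B(\rho_t,\rho_t)$, which the paper takes for granted.
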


\begin{proof}
By the disintegration established in Section~\ref{sec:main_problem},
$d\mu_a=dt\,\mu_{a,t}$ with $\rho_{a,t}=\pi_x\#\mu_{a,t}$ a probability
measure for a.e.\ $t$; by a.e.\ uniqueness of the disintegration, the map
$\mu_a\mapsto(\rho_{a,t})_t$ is affine on $\Delta_a$.

The running and terminal terms are linear, so it suffices to treat the
interaction term, which by~\eqref{eq:sym-interaction} equals
$\int_0^T Q(\rho_t)\,dt$, where $Q(\sigma):=B(\sigma,\sigma)$ and
\[
B(\sigma,\tilde\sigma)
:=\sum_{p,q=1}^{2}
\iint K_{pq}(x-y)\,d\sigma_p(x)\,d\tilde\sigma_q(y).
\]
Relabelling $p\leftrightarrow q$, exchanging the names of the
integration variables, and $K_{qp}(y-x)=K_{pq}(x-y)$ gives
$B(\tilde\sigma,\sigma)=B(\sigma,\tilde\sigma)$. Hence $B$ is a
symmetric bilinear form, and for
$\rho^\theta=\theta\rho^1+(1-\theta)\rho^0$ and
$\eta_t:=\rho^1_t-\rho^0_t$,
\begin{equation}
Q(\rho_t^\theta)
=\theta Q(\rho^1_t)+(1-\theta)Q(\rho^0_t)
-\theta(1-\theta)\,Q(\eta_t).
\label{eq:quad-identity}
\end{equation}
Each $\eta_{p,t}$ is a difference of probability measures on the compact
set $\mathcal X$, so $\|\eta_{p,t}\|_{\mathrm{TV}}\le2$; since the
$K_{pq}$ are bounded, every integral in $Q(\eta_t)$ is finite and
$\eta_t$ is admissible in~\eqref{eq:matrix-psd}. Therefore
$Q(\eta_t)\ge0$ for a.e.\ $t$, and
integrating~\eqref{eq:quad-identity} over $[0,T]$ gives convexity of the
interaction term. Adding the linear terms, $J$ is convex on $\Delta$.
\end{proof}

Because the weak Liouville constraints are linear, $\Delta$ is convex, so
Theorem~\ref{thm:hetero-conv} yields a convex infinite-dimensional
optimization problem whenever $\mathbf K$ is positive semidefinite. This
is readily checked in the settings of interest: if all $W_{pq}$ coincide
with a scalar positive definite kernel $\phi$---Gaussian and other
translation-invariant kernels with nonnegative Fourier transform
qualify~\cite{wendland2004scattered}---then
$\mathbf K(z)=\phi(z)\,\tfrac12(\kappa+\kappa^{\top})$, and a spectral
decomposition of $\tfrac12(\kappa+\kappa^{\top})$
reduces~\eqref{eq:matrix-psd} to a nonnegative combination of scalar
quadratic forms in $\phi$. Since $\kappa_{pq}\ge0$, it therefore suffices
that
\begin{equation}
\kappa_{11}\kappa_{22}\;\ge\;
\Bigl(\tfrac{\kappa_{12}+\kappa_{21}}{2}\Bigr)^{2},
\label{eq:weight-test}
\end{equation}
i.e., that cross-population coupling not exceed the geometric mean of the
two self-couplings.

\subsection{First Variation and Separable Linearization}

To derive a FW method for problem~\eqref{eq:hetero-OMMFC}, fix
\(
(\mu_1,\nu_1,\mu_2,\nu_2)\in\Delta.
\)
Although convexity is most naturally expressed through the symmetrized kernel $\mathbf K$, it is convenient to compute the first variation directly from the original ordered-pair representation~\eqref{eq:hetero-mainObj}. Both representations define the same scalar objective. Define
{\small
\begin{align}
g_a(t,x,u)
:={}&
\ell_a(t,x,u)
+
\sum_{q=1}^2
\kappa_{aq}
\int_{\mathcal X}
W_{aq}(x-y)
\,d\rho_{q,t}(y)
\notag\\
&+
\sum_{p=1}^2
\kappa_{pa}
\int_{\mathcal X}
W_{pa}(y-x)
\,d\rho_{p,t}(y).
\label{eq:ga-hetero}
\end{align}}

For each $a$, write
\begin{align}
\langle g_a,\delta\mu_a\rangle
&:=
\int_{\Sigma_a} g_a(t,x,u)\,d\delta\mu_a(t,x,u), \\
\langle \Psi_a,\delta\nu_a\rangle
&:=
\int_{\mathcal X}\Psi_a(x)\,d\delta\nu_a(x).
\end{align}

\begin{Lemma}[First variation]
\label{lem:hetero-first-variation}
For two feasible points, let
$(\delta\mu_1,\delta\nu_1,\delta\mu_2,\delta\nu_2)$
denote their difference. The first variation of $J$
at the first point in this direction is
\begin{equation}
\delta J
=
\sum_{a=1}^2
\left[
\langle g_a,\delta\mu_a\rangle
+
\langle\Psi_a,\delta\nu_a\rangle
\right].
\label{eq:hetero-first-variation}
\end{equation}
\end{Lemma}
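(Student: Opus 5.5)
The plan is to compute the directional (Gateaux) derivative of $J$ along the segment joining the two feasible points. Let $(\mu_a^0,\nu_a^0)$ and $(\mu_a^1,\nu_a^1)$ be the two feasible points, and set $\mu_a^\theta=\mu_a^0+\theta\delta\mu_a$ and $\nu_a^\theta=\nu_a^0+\theta\delta\nu_a$ for $\theta\in[0,1]$. Since $\Delta$ is convex, $\mu_a^\theta$ stays feasible. We then define $\delta J:=\frac{d}{d\theta}J(\mu^\theta,\nu^\theta)\big|_{\theta=0^+}$ and compute it term by term.

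The running and terminal terms are linear in $(\mu_a,\nu_a)$. Their derivatives are therefore exactly $\langle\ell_a,\delta\mu_a\rangle+\langle\Psi_a,\delta\nu_a\rangle$.

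For the interaction term we use the affine dependence $\mu_a\mapsto(\rho_{a,t})_t$ established in the proof of Theorem~\ref{thm:hetero-conv}. This gives $\rho^\theta_{a,t}=\rho_{a,t}+\theta\eta_{a,t}$ for a.e.\ $t$, where $\eta_{a,t}:=\pi_x\#(\delta\mu_a)_t$ is a signed measure with total variation at most $2$. Each $\mathcal I_{pq}$ is then a quadratic polynomial in $\theta$:
\[
\mathcal I_{pq}(\mu_p^\theta,\mu_q^\theta)=\mathcal I_{pq}(\mu_p,\mu_q)
+\theta\int_0^T\!\!\iint W_{pq}(x-y)\,\bigl[d\eta_{p,t}(x)\,d\rho_{q,t}(y)+d\rho_{p,t}(x)\,d\eta_{q,t}(y)\bigr]dt+\theta^2 R_{pq}.
\]
The remainder $R_{pq}$ is finite because $W_{pq}$ is continuous, hence bounded, on the compact set $\mathcal X-\mathcal X$. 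So the derivative at $\theta=0$ is the linear coefficient, and no limit exchange beyond finiteness is needed.

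Next we regroup the linear coefficients by population using Fubini, which applies since all integrands are bounded and all measures are finite. Fix a population $a$.
\begin{itemize}
\item The terms with $p=a$ contribute $\int_0^T\int\bigl[\kappa_{aq}\int W_{aq}(x-y)\,d\rho_{q,t}(y)\bigr]d\eta_{a,t}(x)\,dt$, summed over $q$.
\item The terms with $q=a$, after renaming the integration variables, contribute $\int_0^T\int\bigl[\kappa_{pa}\int W_{pa}(y-x)\,d\rho_{p,t}(y)\bigr]d\eta_{a,t}(x)\,dt$, summed over $p$.
\item The diagonal case $p=q=a$ correctly appears in both sums, once for each slot.
\end{itemize}

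Finally we lift these integrals back to $\Sigma_a$. Any function $h(t,x)$ that does not depend on $u$ satisfies $\int_0^T\int h\,d\eta_{a,t}\,dt=\int_{\Sigma_a}h\,d\delta\mu_a$, by the time disintegration $d\mu_a=dt\,\mu_{a,t}$ and the definition of the marginal. Both feasible points share Lebesgue time marginal, so $\delta\mu_a$ disintegrates as $dt\,(\mu^1_{a,t}-\mu^0_{a,t})$. The $u$-independent interaction potentials combine with $\ell_a$ into exactly $g_a$ from~\eqref{eq:ga-hetero}, which yields~\eqref{eq:hetero-first-variation}.

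The main obstacle is the bookkeeping in this last step: checking that the disintegration of the signed difference $\delta\mu_a$ is legitimate and that the potentials are jointly measurable in $(t,x)$. Continuity of $W_{pq}$ and measurability of $t\mapsto\rho_{q,t}$, which comes from the disintegration, should suffice for this.
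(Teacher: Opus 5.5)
Your proposal is correct and follows the same route as the paper. The running and terminal terms are linear, and bilinearity of each $\mathcal I_{pq}$ gives the variation in $\mu_a$ as $\sum_q\kappa_{aq}\mathcal I_{aq}(\delta\mu_a,\mu_q)+\sum_p\kappa_{pa}\mathcal I_{pa}(\mu_p,\delta\mu_a)$; rewriting this in integral form produces $g_a$. Your version spells out what the paper compresses into ``writing these terms in integral form'': the exact quadratic dependence on $\theta$, the variable swap that yields $W_{pa}(y-x)$, and the lift from $\eta_{a,t}$ back to $\delta\mu_a$ on $\Sigma_a$ via the shared Lebesgue time marginal.
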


\begin{proof}
The running and terminal terms are linear. For the interaction term, the variation with respect to $\mu_a$ is
\[
\sum_{q=1}^2
\kappa_{aq}
\mathcal I_{aq}(\delta\mu_a,\mu_q)
+
\sum_{p=1}^2
\kappa_{pa}
\mathcal I_{pa}(\mu_p,\delta\mu_a).
\]
Writing these terms in integral form gives~\eqref{eq:ga-hetero}. Summing over populations and adding the terminal terms yields~\eqref{eq:hetero-first-variation}.
\end{proof}

Equation~\eqref{eq:hetero-first-variation} yields a separable first-order linearization once the current iterate is fixed. The cross-population coupling enters the population-dependent running costs $g_1$ and $g_2$, but these become fixed functions during the linear minimization step. This observation is the basis of the decomposition result developed next.

\section{Frank--Wolfe Method for Heterogeneous Occupation-Measure Mean-Field Control}
\label{sec:fw_methods}

We apply the Frank--Wolfe (FW) method~\cite{jaggi2013revisiting} to
problem~\eqref{eq:hetero-OMMFC}. As in the homogeneous OM-MFC
setting~\cite{yu2026occupation}, FW replaces projection onto the
measure-valued feasible set by a linear minimization oracle.

Write
$z:=(\mu_1,\nu_1,\mu_2,\nu_2)$ and $z_k:=(\mu_1^k,\nu_1^k,\mu_2^k,\nu_2^k).$
The FW iteration is
\begin{align}
z_{k+1}
&=(1-\gamma_k)z_k+\gamma_k\tilde z_k,
\label{eq:hetero-FW-combined}\\
\tilde z_k
&\in
\argmin_{\tilde z\in\Delta}
\sum_{a=1}^2
\left[
\langle g_a^k,\tilde\mu_a\rangle
+
\langle\Psi_a,\tilde\nu_a\rangle
\right],
\label{eq:hetero-LMO}
\end{align}
where $g_a^k$ denotes~\eqref{eq:ga-hetero} evaluated at $z_k$.
Although the original objective couples the populations, the
linearized objective is additive once $z_k$ is fixed.

\begin{Theorem}[Population-wise separability of the FW oracle]
\label{thm:hetero-lmo}
For fixed $z_k\in\Delta$, the linear minimization
problem~\eqref{eq:hetero-LMO} separates into
\begin{equation}
\label{eq:hetero-LMO-a}
\min_{(\tilde\mu_a,\tilde\nu_a)\in\Delta_a}
\left[
\langle g_a^k,\tilde\mu_a\rangle
+
\langle\Psi_a,\tilde\nu_a\rangle
\right],
\qquad a=1,2.
\end{equation}
\end{Theorem}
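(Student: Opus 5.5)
The plan is to exploit two structural facts: the feasible set is a Cartesian product, $\Delta=\Delta_1\times\Delta_2$, and once $z_k$ is fixed the linearized objective in~\eqref{eq:hetero-LMO} is a sum of terms, each depending only on $(\tilde\mu_a,\tilde\nu_a)$. The first thing to check is that the coefficient functions $g_a^k$ from~\eqref{eq:ga-hetero} are fixed. They depend on the current marginals $\rho_{q,t}^k$ of $z_k$ but not on the decision variable $\tilde z$, because the first variation in Lemma~\ref{lem:hetero-first-variation} is evaluated at $z_k$. All cross-population coupling is therefore absorbed into $g_1^k$ and $g_2^k$. Since $\ell_a$ and $W_{pq}$ are continuous on compact sets, each $g_a^k$ is bounded and measurable, so $\langle g_a^k,\tilde\mu_a\rangle$ is finite for every $\tilde\mu_a\in\mathcal M_+(\Sigma_a;T)$.

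Next, write the linearized objective as $L(\tilde z)=L_1(\tilde\mu_1,\tilde\nu_1)+L_2(\tilde\mu_2,\tilde\nu_2)$, where $L_a:=\langle g_a^k,\tilde\mu_a\rangle+\langle\Psi_a,\tilde\nu_a\rangle$. Also note that the constraint defining $\Delta_a$ in~\eqref{eq:hetero-feasible} involves only $(\mu_a,\nu_a)$, $f_a$ and $\rho_0^a$, so there are no joint constraints. The standard product argument then applies:
\[
\inf_{\tilde z\in\Delta}L(\tilde z)=\inf_{(\tilde\mu_1,\tilde\nu_1)\in\Delta_1}L_1+\inf_{(\tilde\mu_2,\tilde\nu_2)\in\Delta_2}L_2 .
\]
The inequality ``$\ge$'' is immediate termwise. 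For ``$\le$'', take near-minimizing sequences in each factor and pair them, which is allowed because the set is a product. From this identity I will draw two conclusions. First, if $(\tilde\mu_a^\star,\tilde\nu_a^\star)$ solves~\eqref{eq:hetero-LMO-a} for each $a$, then their concatenation solves~\eqref{eq:hetero-LMO}. Second, conversely, any minimizer $\tilde z_k$ of~\eqref{eq:hetero-LMO} has components that minimize the respective $L_a$. Otherwise, replacing one component with a strictly better feasible point would lower $L$ while staying in $\Delta$, a contradiction.

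The only step needing real care is ensuring these minima are attained and finite, so that the argmin in~\eqref{eq:hetero-LMO} is meaningful. My approach is to show each $\Delta_a$ is weak-$*$ compact. It is nonempty by assumption. The measures have fixed masses $T$ and $1$ on compact spaces, so the set is bounded and, by Banach--Alaoglu/Prokhorov, relatively compact. It is also closed, since the Liouville identity tests against continuous functions $\partial_t v+\nabla_x v\cdot f_a$ and $v(T,\cdot)$, which pass to weak-$*$ limits because $f_a$ is continuous.

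Lower semicontinuity of $L_a$ needs some attention: $\Psi_a$ is continuous, but $g_a^k$ is only bounded measurable in $t$ through $\rho_{q,t}^k$. I would handle this by noting that the time marginal of every feasible $\tilde\mu_a$ is Lebesgue. A bounded function that is continuous in $(x,u)$ and measurable in $t$ is then a Carathéodory integrand, and integration against measures with a fixed time marginal is weak-$*$ continuous (Scorza-Dragoni/Young-measure argument). If this technicality is to be avoided, it suffices to state the separability at the level of infima together with the argmin correspondence above, which holds regardless of attainment.
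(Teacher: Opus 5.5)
Your proposal is correct and follows essentially the same route as the paper's proof: $g_a^k$ is frozen at $z_k$, $\Delta=\Delta_1\times\Delta_2$ is a product with no joint constraints, and the linearized objective is additive across populations, so the minimization splits. Your infimum identity, two-way argmin correspondence, and weak-$*$ compactness/Carath\'eodory argument for attainment add rigor the paper's one-line proof leaves implicit, but the separation itself does not depend on them.
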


\begin{proof}
At $z_k$, the functions $g_a^k$ are fixed. Since
$\Delta=\Delta_1\times\Delta_2$ and the objective
in~\eqref{eq:hetero-LMO} is additive across populations, minimization
decomposes into the independent problems~\eqref{eq:hetero-LMO-a}.
\end{proof}

Each population-wise oracle can be realized through classical optimal
trajectories under the following regularity, interiority, and selection assumptions.

\begin{Lemma}[Population-wise realization]
\label{lem:hetero-lmo-realization}
Fix $z_k\in\Delta$. For each $a$, assume $\mathcal U_a$
is compact, $\Psi_a$ is continuous, and $f_a,g_a^k$
are continuous and locally Lipschitz in $x$, uniformly
in $(t,u)$, on a neighborhood of $\mathcal X$.
Assume all admissible relaxed trajectories from
$\rho_0^a$-a.e.\ initial state remain in a compact
$D_a\subset\operatorname{int}\mathcal X$.
Suppose
\begin{equation}
\label{eq:hetero-parametric-OCP}
\inf_{\substack{x(0)=\xi,\ \dot x=f_a(x,u)\\
x(t)\in\mathcal X,\ u(t)\in\mathcal U_a}}
\left\{\int_0^T g_a^k(t,x,u)\,dt+\Psi_a(x(T))\right\}
\end{equation}
admits a measurable selection of classical minimizers
$\omega_{a,k}(\xi)$. Then
\[
(\bar\mu_{a,k},\bar\nu_{a,k})
=\int_{\mathcal X}
(\mu[\omega_{a,k}(\xi)],\nu[\omega_{a,k}(\xi)])
\,d\rho_0^a(\xi)
\]
solves~\eqref{eq:hetero-LMO-a}; combining the populations
solves~\eqref{eq:hetero-LMO}.
\end{Lemma}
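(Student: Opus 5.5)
The argument has two parts: feasibility of the averaged measures, and a lower bound on the linearized cost over all of $\Delta_a$ that this candidate attains.

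\textbf{Feasibility.} For each $\xi$, $\omega_{a,k}(\xi)$ is an admissible classical trajectory. By the chain rule along $x(\cdot)$, its occupation pair $(\mu[\omega_{a,k}(\xi)],\nu[\omega_{a,k}(\xi)])$ satisfies
\[
v(T,x(T))-v(0,\xi)=\int_0^T\bigl(\partial_t v+\nabla_x v\cdot f_a(x,u)\bigr)\,dt .
\]
Measurability of the selection makes $\xi\mapsto\int\varphi\,d\mu[\omega_{a,k}(\xi)]$ measurable and bounded, since $\Sigma_a$ is compact and $\varphi$ is continuous. Integrating the identity against $\rho_0^a$ then shows $(\bar\mu_{a,k},\bar\nu_{a,k})\in\Delta_a$, and the masses come out as $T$ and $1$. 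By linearity, the linearized cost of this pair equals $\int V_a(\xi)\,d\rho_0^a(\xi)$, where $V_a(\xi)$ is the value of~\eqref{eq:hetero-parametric-OCP}.

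\textbf{Lower bound.} It remains to show that every $(\tilde\mu_a,\tilde\nu_a)\in\Delta_a$ satisfies
\[
\langle g_a^k,\tilde\mu_a\rangle+\langle\Psi_a,\tilde\nu_a\rangle\ \ge\ \int V_a\,d\rho_0^a .
\]
I would use a superposition principle for the weak Liouville equation, disintegrating as in the PDE interpretation of Section~\ref{sec:main_problem}. The pair $(\tilde\mu_a,\tilde\nu_a)$ yields the continuity equation~\eqref{eq:hetero-relaxed-pde} with a bounded relaxed velocity $F_a$. Ambrosio's superposition theorem, or the analogous result for occupation measures in the homogeneous paper~\cite{yu2026occupation}, then represents the pair as an average over $\rho_0^a$ of occupation measures of relaxed trajectories $\dot x\in\operatorname{conv} f_a(x,\mathcal U_a)$. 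Each relaxed trajectory carries a control kernel, and its relaxed cost $\int_0^T\!\int g_a^k\,d\lambda_t\,dt+\Psi_a(x(T))$ reproduces the linearized cost after averaging.

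For each starting point $\xi$, the relaxed cost is at least the relaxed value function, which equals the classical value $V_a(\xi)$ by the Filippov--Wa\.zewski relaxation theorem. That theorem applies under the stated hypotheses: $f_a$ and $g_a^k$ are continuous and locally Lipschitz in $x$, and $\mathcal U_a$ is compact. The interiority assumption that trajectories stay in $D_a\subset\operatorname{int}\mathcal X$ is what makes this work, because the state constraint $x(t)\in\mathcal X$ is then inactive. Classical approximants of a relaxed trajectory stay uniformly close to it, so they remain in $\mathcal X$, and the lower-semicontinuity and density argument goes through without a state-constrained relaxation theorem. Integrating over $\rho_0^a$ gives the bound, so $(\bar\mu_{a,k},\bar\nu_{a,k})$ is optimal for~\eqref{eq:hetero-LMO-a}. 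Combining the two populations via Theorem~\ref{thm:hetero-lmo} solves~\eqref{eq:hetero-LMO}.

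The main obstacle is the superposition step for a general element of $\Delta_a$, which need not come from trajectories a priori. The difficulty is justifying the representation with relaxed controls when the velocity field is only measurable, and ensuring the cost integrand $g_a^k$ is treated jointly in $(x,u)$ rather than only through $F_a$. I would lift to the graph space $(x,\text{relaxed control})$, or cite Lasserre et al.~\cite{lasserre2008nonlinear}, where the equality of the relaxed and classical values for occupation measures is established under exactly these Lipschitz and compactness conditions.
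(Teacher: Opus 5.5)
Your proposal is correct and follows essentially the same route as the paper's sketch. Both arguments check that the aggregated classical occupation measures are feasible, represent an arbitrary feasible pair by relaxed trajectories via superposition, and use relaxation (chattering or Filippov--Wa\.zewski), with the interiority of $D_a$ keeping the classical approximants admissible, so that relaxed and classical oracle values coincide. One point to make explicit: Filippov--Wa\.zewski gives equality of values, and not just density of trajectories, once you apply it to the cost-augmented system $(\dot x,\dot y)\in\operatorname{conv}\{(f_a(x,u),g_a^k(t,x,u)):u\in\mathcal U_a\}$. This is the paper's ``augmented state--cost dynamics,'' and it settles your concern about treating $g_a^k$ jointly in $(x,u)$.
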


\proofsketch
Superposition represents feasible measures by relaxed
trajectories~\cite{bogachev2007measure}. Chattering of the augmented
state--cost dynamics yields classical
approximations~\cite{vinter2010optimal}, which remain admissible since
$\operatorname{dist}(D_a,\partial\mathcal X)>0$. Consequently, classical and relaxed oracle values coincide,
and the measurable classical minimizers attain the
population-wise oracle after aggregation. The argument
follows the proof structure of~\cite[App.~B--C]{yu2026occupation},
with admissibility ensured by the interiority assumption above.
\endproof

\begin{algorithm}[t]
\caption{FW for Heterogeneous OM-MFC}
\label{alg:hetero-FW}
\begin{algorithmic}[1]
\Require $z_0\in\Delta$, step sizes $\{\gamma_k\}_{k\ge0}$.
\For{$k=0,\ldots,K-1$}
    \State Compute $g_a^k$ from~\eqref{eq:ga-hetero}, $a=1,2$.
    \For{$a=1,2$}
        \State Solve~\eqref{eq:hetero-parametric-OCP} for
        $\rho_0^a$-a.e.\ $\xi$ and construct
        $(\bar\mu_{a,k},\bar\nu_{a,k})$.
    \EndFor
    \State
    $(\mu_a^{k+1},\nu_a^{k+1})
    \gets
    (1-\gamma_k)(\mu_a^k,\nu_a^k)
    +
    \gamma_k(\bar\mu_{a,k},\bar\nu_{a,k})$,
    $a=1,2$.
\EndFor
\State \Return $z_K$.
\end{algorithmic}
\end{algorithm}
Under Lemma~\ref{lem:hetero-lmo-realization} and a
classical-trajectory initialization, each FW iterate
is a finite convex combination of aggregated
classical-trajectory measures. For finitely supported
initial distributions, this gives a finite weighted
trajectory ensemble. Continuous initial distributions
may be replaced by fixed empirical distributions for
numerical implementation.
\begin{Theorem}[Convergence rate]
\label{thm:hetero-rate}
Assume the conditions of Theorem~\ref{thm:hetero-conv}
and Lemma~\ref{lem:hetero-lmo-realization} at every iteration.
Let $\{z_k\}$ be generated by Algorithm~\ref{alg:hetero-FW}
with $\gamma_k=2/(k+2)$ and exact linear minimization. Let
\begin{equation*}
C_J:=\!\!\sup_{\substack{z,s\in\Delta\\ \gamma\in(0,1]}}\!\!
\frac{2}{\gamma^{2}}
\Bigl[J\bigl(z+\gamma(s-z)\bigr)-J(z)-\gamma\,\delta J(z;s-z)\Bigr]
\end{equation*}
denote the curvature constant of $J$ over
$\Delta$~\cite{jaggi2013revisiting}, with $\delta J$ as in
Lemma~\ref{lem:hetero-first-variation}. Then
\begin{equation}
C_J\;\le\;32\,T\max_{p,q}\|K_{pq}\|_{\infty}\;<\;\infty,
\label{eq:curvature}
\end{equation}
and for every $k\ge1$,
\begin{equation}
J(z_k)-J^*\;\le\;\frac{2C_J}{k+2},
\qquad J^*:=\inf_{z\in\Delta}J(z).
\label{eq:rate}
\end{equation}
\end{Theorem}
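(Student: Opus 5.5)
The argument has two parts. First we bound the curvature constant $C_J$ through the exact quadratic structure of $J$. Then we plug that bound into the standard Frank--Wolfe recursion of~\cite{jaggi2013revisiting}. The oracle is exact by Theorem~\ref{thm:hetero-lmo} and Lemma~\ref{lem:hetero-lmo-realization}, and $J$ is convex by Theorem~\ref{thm:hetero-conv}.

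\textbf{Curvature bound.} The running and terminal terms are linear, so they cancel exactly in $J(z+\gamma(s-z))-J(z)-\gamma\,\delta J(z;s-z)$. Only the interaction term contributes. By~\eqref{eq:sym-interaction} it equals $\int_0^T Q(\rho_t)\,dt$, with $Q(\sigma)=B(\sigma,\sigma)$ and $B$ the symmetric bilinear form from the proof of Theorem~\ref{thm:hetero-conv}.

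We first check that the $\delta J$ of Lemma~\ref{lem:hetero-first-variation}, computed from the ordered kernels, coincides with $2\int_0^T B(\rho_t,\eta_t)\,dt$, where $\eta_t$ is the state-marginal difference. This follows by the same change of variables that gives~\eqref{eq:sym-interaction}.

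The map $\mu_a\mapsto(\rho_{a,t})_t$ is affine. Hence the marginal of $z+\gamma(s-z)$ is $\rho_t+\gamma\eta_t$, and expanding the quadratic gives the exact identity
\[
J\bigl(z+\gamma(s-z)\bigr)-J(z)-\gamma\,\delta J(z;s-z)=\gamma^2\int_0^T Q(\eta_t)\,dt .
\]
Each $\eta_{p,t}$ is a difference of two probability measures, so $\|\eta_{p,t}\|_{\mathrm{TV}}\le 2$. Therefore
\[
|Q(\eta_t)|\le\sum_{p,q}\|K_{pq}\|_\infty\|\eta_{p,t}\|_{\mathrm{TV}}\|\eta_{q,t}\|_{\mathrm{TV}}\le 4\cdot4\max_{p,q}\|K_{pq}\|_\infty ,
\]
summing over the four pairs $(p,q)$. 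Multiplying by $2/\gamma^2$ and integrating over $[0,T]$ yields $C_J\le 32\,T\max_{p,q}\|K_{pq}\|_\infty$, which is~\eqref{eq:curvature}.

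\textbf{Rate.} Write $h_k:=J(z_k)-J^*$. Let $z^\star$ be any point of $\Delta$ (or a near-minimizer, if the infimum is not attained). By convexity (Theorem~\ref{thm:hetero-conv}) and the exact-quadratic structure, $\delta J(z_k;z^\star-z_k)\le J(z^\star)-J(z_k)$. Exactness of the oracle gives $\delta J(z_k;\tilde z_k-z_k)\le\delta J(z_k;z^\star-z_k)$. Combining these with the definition of $C_J$ gives
\[
h_{k+1}\le(1-\gamma_k)h_k+\tfrac{\gamma_k^2}{2}C_J .
\]
If $J^*$ is not attained, we take $J(z^\star)\le J^*+\epsilon$ and let $\epsilon\to0$. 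Induction with $\gamma_k=2/(k+2)$ then gives $h_k\le 2C_J/(k+2)$. For $k=1$ we have $\gamma_0=1$, so $h_1\le C_J/2\le 2C_J/3$. For the inductive step, $\tfrac{k}{k+2}\cdot\tfrac{2C_J}{k+2}+\tfrac{2C_J}{(k+2)^2}=\tfrac{2C_J(k+1)}{(k+2)^2}\le\tfrac{2C_J}{k+3}$.

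\textbf{Main obstacle.} The delicate point is matching $\delta J$ from Lemma~\ref{lem:hetero-first-variation}, written with the ordered, possibly directional kernels $W_{pq}$, to the symmetrized derivative $2B(\rho_t,\eta_t)$. This requires checking term by term that the two integrals in~\eqref{eq:ga-hetero} reproduce $\kappa_{pq}W_{pq}(x-y)+\kappa_{qp}W_{qp}(y-x)=2K_{pq}(x-y)$. Once this is done, the second-order remainder is exactly $\gamma^2\int_0^T Q(\eta_t)\,dt$, and the rest is routine. We also note that PSD is used only in the rate step, through convexity; the curvature bound itself holds regardless, since it uses boundedness only.
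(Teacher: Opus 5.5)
Your proposal is correct and follows the paper's proof: the exact second-order identity $J(z+\gamma d)-J(z)-\gamma\,\delta J(z;d)=\gamma^2\int_0^T Q(\eta_t)\,dt$, the total-variation bound $|Q(\eta_t)|\le 16\max_{p,q}\|K_{pq}\|_\infty$ giving $C_J\le 32T\max_{p,q}\|K_{pq}\|_\infty$, and then the standard Frank--Wolfe recursion under convexity. You add two things the paper does not spell out: an explicit check that the ordered-kernel first variation of Lemma~\ref{lem:hetero-first-variation} equals $2\int_0^T B(\rho_t,\eta_t)\,dt$, and the full $h_{k+1}\le(1-\gamma_k)h_k+\tfrac{\gamma_k^2}{2}C_J$ induction, which the paper instead takes from~\cite{jaggi2013revisiting}.
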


\begin{proof}
For $z,s\in\Delta$ put $d=s-z$.
The marginal map is affine and the interaction term is quadratic, so by
Lemma~\ref{lem:hetero-first-variation}
\[
J(z+\gamma d)-J(z)-\gamma\,\delta J(z;d)
=\gamma^{2}\!\int_0^T\! Q(\rho^{d}_t)\,dt
\]
holds exactly for every $\gamma\in(0,1]$ with $Q$ as in the proof of Theorem~\ref{thm:hetero-conv}. Hence
$C_J=2\sup_{z,s\in\Delta}\int_0^T Q(\rho^{s}_t-\rho^{z}_t)\,dt$. Each
$\rho^{s}_{p,t}-\rho^{z}_{p,t}$ is a difference of probability measures,
so its total variation is at most $2$, giving
$|Q(\rho^{s}_t-\rho^{z}_t)|\le16\max_{p,q}\|K_{pq}\|_{\infty}$ for a.e.\
$t$; integrating over $[0,T]$ yields~\eqref{eq:curvature}. Since $J$ is
convex by Theorem~\ref{thm:hetero-conv} and $C_J<\infty$,
\eqref{eq:rate} follows from the standard Frank--Wolfe descent
argument~\cite[Thm.~1]{jaggi2013revisiting}, which carries over to the
measure setting as in~\cite[App.~D]{yu2026occupation}.
\end{proof}

If the positive semidefiniteness condition fails,
Algorithm~\ref{alg:hetero-FW} remains applicable but~\eqref{eq:rate}
does not, and such cases are reported empirically. The fully-corrective
variant used in Section~\ref{sec:numerical} re-optimizes the weights over the accumulated atoms. With an exact oracle and exact correction, its objective is no larger than that of a FW step from its own current iterate, so the same descent bound applies~\cite{yu2026occupation}.

\section{Numerical Illustration}
\label{sec:numerical}

We illustrate the framework through two numerical studies.
The first examines two-population UAV crossing with Gaussian
kernels under symmetric and heterogeneous weighting, both
satisfying Theorem~\ref{thm:hetero-conv}, and reports the
objective decrease of fully-corrective FW. The second
considers three-dimensional search-and-rescue with directional
cross-population kernels, demonstrating the use of the
population-wise oracle decomposition outside the certified
convex regime.

\subsection{UAV Crossing with Convex Scalar Kernels}
\label{scalar}

Consider two populations with single-integrator dynamics
\[
\dot{x}^a(t)=u^a(t),\qquad
x^a(t),u^a(t)\in\mathbb{R}^2,\qquad T=4,
\]
and heterogeneous control bounds
$\|u^1(t)\|\le 6$ and $\|u^2(t)\|\le 4.$
Population~1 moves from $x_0^1=(0,4)$ to $x_g^1=(8,4)$, whereas
Population~2 moves from $x_0^2=(4,0)$ to $x_g^2=(4,8)$.
Their nominal paths intersect at $(4,4)$, where a circular obstacle
with radius $r=0.6$ is placed. Both initial distributions are point masses, $\rho_0^a=\delta_{x_0^a}$; the spread visible in the figures arises from the mixture of trajectory atoms generated by the fully-corrective scheme.

The running and terminal costs are
$\ell_a(t,x,u)
=
\alpha\|u\|^2
+
\beta\max(0,r+\delta-\|x-c\|)^2,
\Psi_a(x)
=
\lambda_\Psi\|x-x_g^a\|^2,
$
with
$
\alpha=0.1$,
$\lambda_\Psi=20$,
$\delta=0.2$, and
$\beta=5000$.
All pairwise interaction kernels are the same Gaussian,
\[
W_{pq}(z)
=
\exp\!\left(-\frac{\|z\|^2}{2\sigma^2}\right),
\qquad
\sigma=1.
\]

The horizon is discretized using $N=150$ uniform time steps, and
$K=40$ fully-corrective FW iterations are performed. Each
population-wise subproblem~\eqref{eq:hetero-parametric-OCP} is solved on
this grid by gradient descent on the discretized control sequence, with
gradients obtained by the adjoint method and updates performed by Adam.
At each fully-corrective step, the convex-combination weights over
previously generated trajectory atoms are re-optimized through a
quadratic program over the simplex.

\paragraph{Scenario 1: Symmetric Convex Coupling.}

We set
\[
(\kappa_{pq})
=
\begin{pmatrix}
1.0 & 0.5\\
0.5 & 1.0
\end{pmatrix}.
\]
The Gaussian kernel is positive definite and
$\kappa_{11}\kappa_{22}=1.0\ge0.25
=\bigl(\tfrac{\kappa_{12}+\kappa_{21}}{2}\bigr)^{2}$, so
\eqref{eq:weight-test} holds and Theorem~\ref{thm:hetero-conv} applies. Figure~\ref{fig:slices_sym} shows that
the two populations split around the obstacle in a nearly symmetric
fashion during the crossing phase and subsequently reconverge toward
their respective targets.

\begin{figure}[t]
    \centering
    \includegraphics[width=\linewidth]{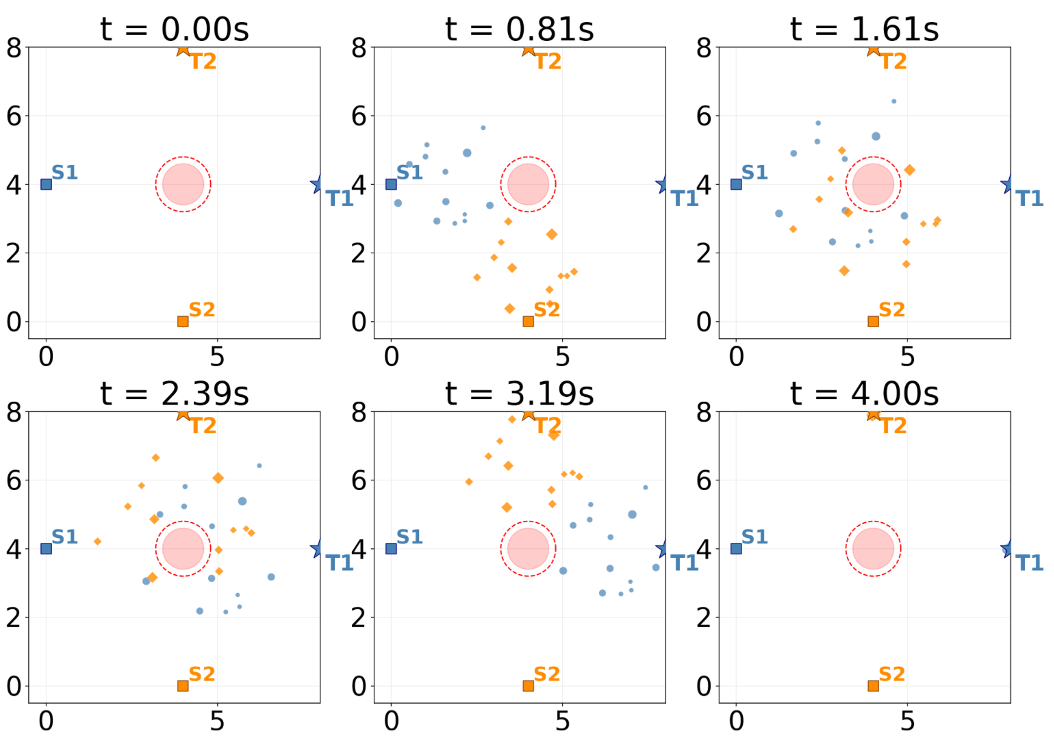}
    \caption{Scenario 1: symmetric convex coupling.}
    \label{fig:slices_sym}
    \vspace{-20pt}
\end{figure}

\paragraph{Scenario 2: Heterogeneous Convex Weighting.}

We next set
\[
(\kappa_{pq})
=
\begin{pmatrix}
1.0 & 0.8\\
0.1 & 0.3
\end{pmatrix}.
\]
Here $\kappa_{11}\kappa_{22}=0.30\ge0.2025
=\bigl(\tfrac{\kappa_{12}+\kappa_{21}}{2}\bigr)^{2}$, so
\eqref{eq:weight-test} again holds, with a smaller margin. Since $\mathbf K$ depends on $\kappa_{12}$ and
$\kappa_{21}$ only through their sum, this scenario
differs from Scenario~1 through the self-weights and
the effective symmetric cross-weight rather than
through any directed coupling. Figure~\ref{fig:slices_asym} shows that Population~1 develops a broader distribution and follows a somewhat longer route through the crossing region, whereas Population~2 remains more concentrated.
\begin{figure}[t]
    \centering
    \includegraphics[width=\linewidth]{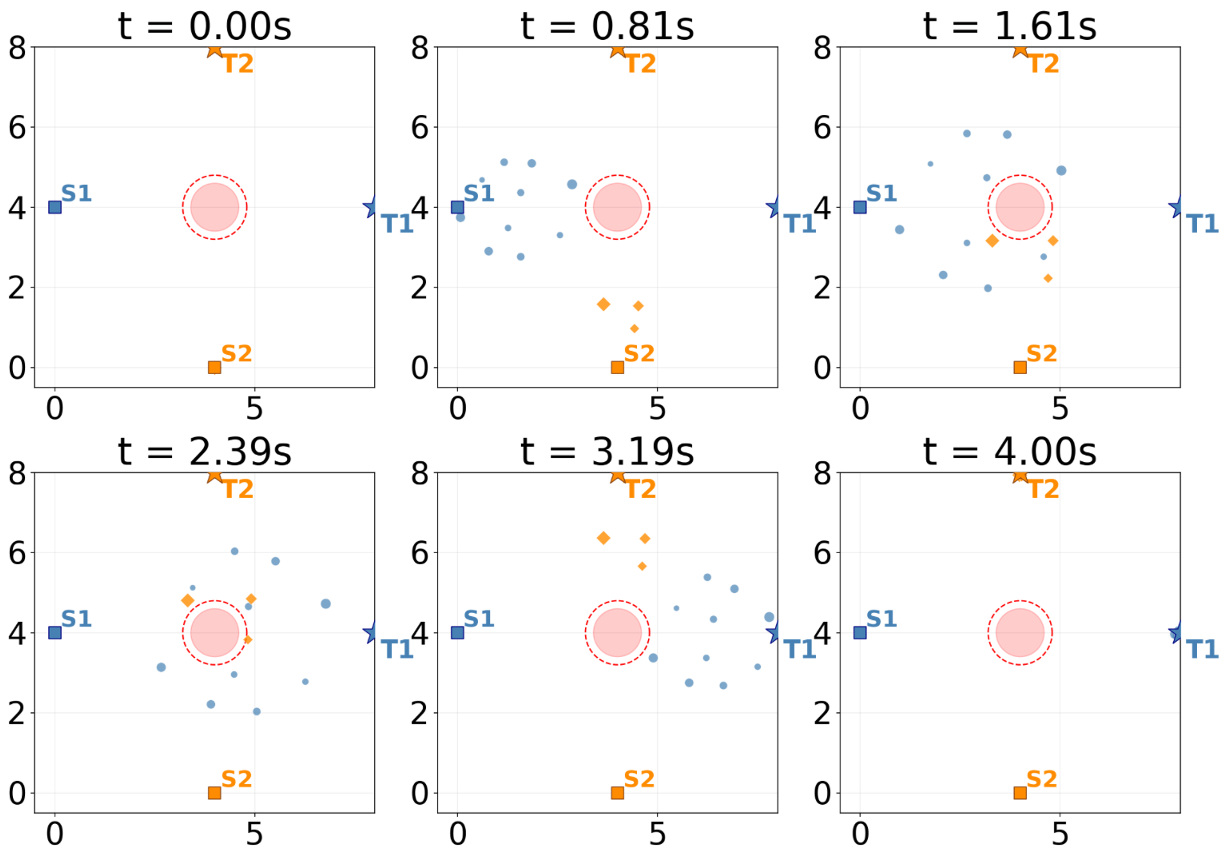}
    \caption{Scenario 2: heterogeneous convex weighting.}
    \label{fig:slices_asym}
\end{figure}
Figure~\ref{fig:obj_convex} compares the objective histories for the
two convex scenarios. In both cases, the objective decreases rapidly
during the early FW iterations and subsequently stabilizes as the
fully-corrective atom set is enriched.
\begin{figure}[t]
    \centering
    \vspace{-10pt}
    \includegraphics[width=1\linewidth]{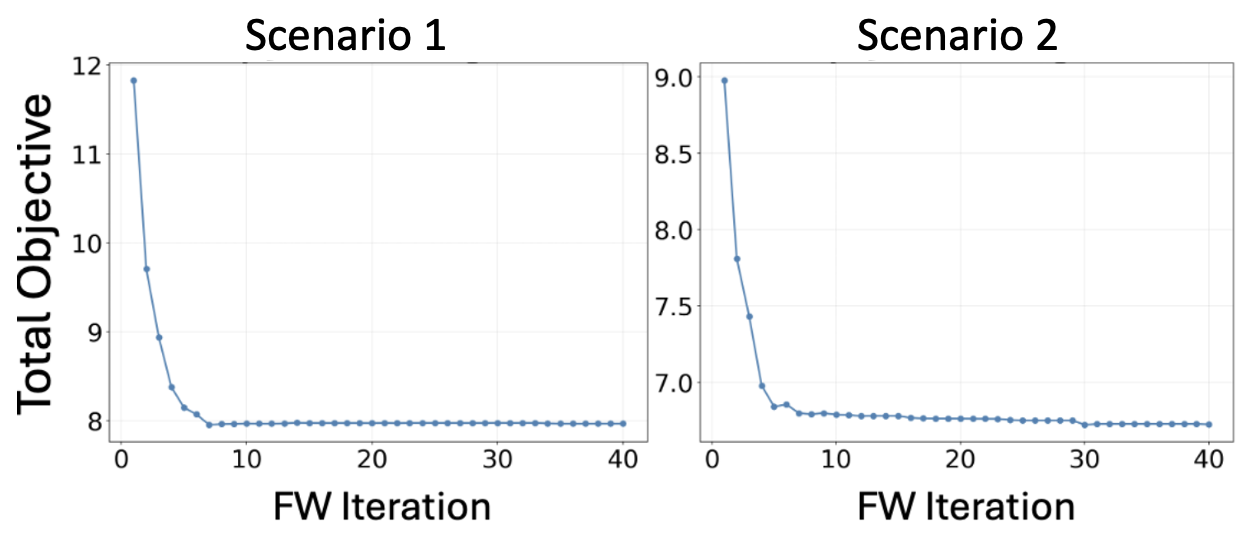}
    \caption{Objective histories for the two convex UAV crossing scenarios.}
    \label{fig:obj_convex}
    \vspace{-20pt}
\end{figure}
In both scenarios, the computed trajectories remain outside the
obstacle boundary. Since obstacle avoidance is imposed through a soft
penalty rather than a hard state constraint, this is an empirical
observation rather than an analytical safety guarantee.

\subsection{Directional 3D Search-and-Rescue}

We next consider a three-dimensional search-and-rescue scenario in
which Population~1 represents Rescue agents and Population~2
represents Search agents. Both populations satisfy
\[
\dot{x}^a(t)=u^a(t),\qquad
x^a(t),u^a(t)\in\mathbb{R}^3,\qquad T=5,
\]
with heterogeneous control bounds
$\|u^1(t)\|\le2.5$ and
$\|u^2(t)\|\le5$.
Both populations start from point masses at $(0,4,4)$ and move toward
$(12,4,4)$ while navigating four spherical obstacles.
Unlike the preceding examples, directional behavior is introduced
through the kernels themselves. To favor configurations in which
Search remains ahead of Rescue along the mission direction
$d=(1,0,0)$, we use
\begin{align}
W_{12}(z)
&=
\phi(z)
\left(
1+\varepsilon\tanh(\beta_d d^\top z)
\right),
\\
W_{21}(z)
&=
\phi(z)
\left(
1-\varepsilon\tanh(\beta_d d^\top z)
\right),
\end{align}
where
$
\phi(z)
=
\exp\!\left(-\frac{\|z\|^2}{2\sigma^2}\right)$,
$\sigma=1$,
$\varepsilon=0.6$,and $\beta_d=1.5.$
For the $W_{12}$ term,
$z=x_{\mathrm{rescue}}-x_{\mathrm{search}}$.
For $d^\top z>0$---that is, with Rescue ahead of Search---we have
$W_{12}(z)>W_{12}(-z)$, so that configuration is penalized more heavily
than its mirror image, and the ordering with Search ahead is
correspondingly favored.
Moreover,
$W_{21}(z)=W_{12}(-z)$,
so the two directional kernels consistently favor the desired
population ordering.

The remaining kernels are $W_{11}=W_{22}=\phi$, and the interaction weights are
$\kappa_{11}=1.0$, 
$\kappa_{22}=0.8$, 
$\kappa_{12}=0.9$, 
$\kappa_{21}=0.9$.
The cross kernels are not of the common form assumed
in~\eqref{eq:weight-test}: the factor
$1\pm\varepsilon\tanh(\beta_d d^\top z)$ is not even, which is precisely
what makes the interaction directional. Convexity is therefore not
certified for this example, which illustrates the population-wise
decomposition of Theorem~\ref{thm:hetero-lmo} outside the regime of
Theorem~\ref{thm:hetero-conv}.

Figure~\ref{fig:slices_dir} shows representative snapshots
of the computed population distributions. Search remains
ahead of Rescue along the mission direction while both
populations maneuver around the obstacles and approach
the target region. The instantaneous worst-case ordering
margin
\[
m(t):=
\min_j d^\top x_j^{\mathrm{search}}(t)
-\max_k d^\top x_k^{\mathrm{rescue}}(t)
\]
is nonnegative at all sampled times, with a sample
average of $0.373$. Rescue incurs control energy $41.4$,
compared with $31.1$ for Search, and the observed minimum
obstacle clearances range from $+0.20$ to $+0.64$. These quantities characterize the reported numerical solution only.
The directional interactions and obstacle penalties encourage the
observed ordering and collision avoidance but do not impose either as
a hard constraint.

\begin{figure}[t]
    \centering
    \includegraphics[width=\linewidth]{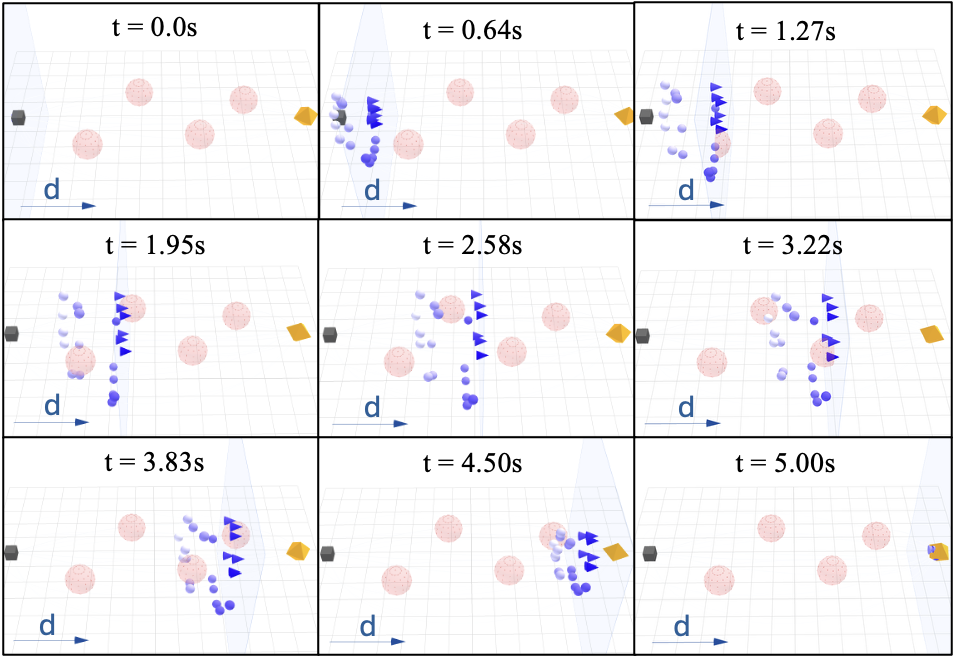}
    \caption{3D search-and-rescue: Rescue (purple),
Search (blue), obstacles (red).}
    \label{fig:slices_dir}
    \vspace{-20pt}
\end{figure}
\section{Conclusion}
\label{sec:conclusion}

We developed a heterogeneous occupation-measure formulation for
multi-population mean-field control. The main structural results are
threefold. First, positive semidefiniteness of the symmetrized
matrix-valued kernel ensures convexity and yields an
explicit weight test for a common positive definite kernel. Second, despite cross-population coupling in the original objective, the Frank--Wolfe linear minimization oracle remains population-wise separable after
linearization and admits classical optimal control realization under
additional assumptions. Third, the curvature of the heterogeneous
objective is bounded explicitly in terms of the horizon and the
symmetrized kernels, yielding the standard $O(1/k)$ rate for exact-oracle FW under the stated assumptions. The numerical examples illustrate this decomposition under heterogeneous
dynamics, interaction weights, and directional kernels. The present work
focuses on the population-level relaxed formulation; establishing
finite-agent approximation guarantees, general relaxation-gap analysis,
and convergence guarantees for nonconvex directional interactions remains
for future work.

\bibliographystyle{ieeetr}
\bibliography{ref}

\end{document}